\newcommand{\be}{\begin{equation}}
\newcommand{\ee}{\end{equation}}
\newcommand{\ba}{\begin{align}}
\newcommand{\ea}{\end{align}}
\newtheorem{theorem}{Theorem}[section]
\newtheorem{lemma}[theorem]{Lemma}
\newtheorem*{proposition*}{Proposition}
\newtheorem{corollary}[theorem]{Corollary}
\newtheorem*{theorem*}{Theorem}
\newtheorem*{corollary*}{Corollary}
\newtheorem*{cor*}{Corollary}	
\theoremstyle{definition}
\newtheorem{conjecture}[theorem]{Conjecture}	
\def\beq{\begin{equation}}\def\enq{\end{equation}}
\begin{document}

\title[Extending recent work of Nath, Saikia, and Sarma on $k$-tuple $\ell$-regular Partitions]{Extending recent work of Nath, Saikia, and Sarma on $k$-tuple $\ell$-regular Partitions }

\author[B. Paudel]{Bishnu Paudel}

\author[J. Sellers]{James A. Sellers}

\author[H. Wang]{Haiyang Wang}

\address{Mathematics and Statistics Department\\
         University of Minnesota Duluth\\
         Duluth, MN 55812, USA}
\email{bpaudel@d.umn.edu, jsellers@d.umn.edu, wan02600@d.umn.edu}

\begin{abstract}
Let $T_{\ell,k}(n)$ denote the number of $\ell$-regular $k$-tuple partitions of $n$. In a recent work, Nath, Saikia, and Sarma derived several families of congruences for $T_{\ell,k}(n)$, with particular emphasis on the cases $T_{2,3}(n)$ and $T_{4,3}(n)$. In the concluding remarks of their paper, they conjectured that $T_{2,3}(n)$ satisfies an infinite set of congruences modulo 6. In this paper, we confirm their conjecture by proving a much more general result using elementary $q$-series techniques. We also present new families of congruences  satisfied by $T_{\ell,k}(n)$.
\end{abstract}

\maketitle

\section{Introduction}

A \textit{partition} of a non-negative integer $n$ is a non-increasing sequence of positive integers, called parts, whose sum is $n$. By convention, zero has only one partition, namely, the empty sequence. Let $p(n)$ denote the number of partitions of $n$. Ramanujan \cite{Ramanujan} famously established the celebrated congruences
\begin{align*}
	p(5n+4)&\equiv 0\pmod 5,\\
	p(7n+5)&\equiv 0\pmod 7,\\
	p(11n+6)&\equiv 0\pmod {11}.
\end{align*}

Since Ramanujan’s pioneering work, mathematicians have investigated further congruences for $p(n)$ and the arithmetic properties of its various generalizations. In this paper, we consider the following generalization. If
\[
n_1, n_2, \dots, n_k \ge 0 \quad \text{with} \quad n_1 + n_2 + \cdots + n_k = n,
\]
and if $\lambda_i$ is a partition of $n_i$ for each $i=1,\dots,k$, then the sequence
\[
(\lambda_1, \lambda_2, \dots, \lambda_k)
\]
is called a \textit{$k$-tuple} partition of $n$. For instance, if $\lambda_1 = (3,2,1)$ and $\lambda_2 = (7,6,2)$, then $(\lambda_1, \lambda_2)$ forms a 2-tuple partition of $21$.  

A partition is called \textit{$\ell$-regular} if none of its parts is divisible by $\ell$. Correspondingly, a $k$-tuple partition is said to be \textit{$\ell$-regular} if each $\lambda_i$ is an $\ell$-regular partition of $n_i$ for $1 \le i \le k$. We denote the number of $k$-tuple $\ell$-regular partitions of $n$ by $T_{\ell,k}(n)$ and, in particular, define
\[
T_\ell(n) := T_{\ell,3}(n).
\]

The theory of $\ell$-regular partitions has been extensively developed in the literature (see, for example, \cite{Furcy-Penniston}, \cite{Gordon-Ono}, and \cite{Hirschhorn-Sellers}). More recently, $\ell$-regular $k$-tuple partitions have attracted significant interest, and various divisibility properties of $T_{\ell,k}(n)$ have been explored.  For instance, the case $(\ell,k)=(3,3)$ was studied by Adiga and Dasappa \cite{Adiga-Dasappa}, da Silva and Sellers \cite{daSilva-Sellers}, and Gireesh and Mahadeva Naika \cite{Gireesh-Mahadeva_Naika}; the cases $(\ell,k)=(3,9)$ and $(3,27)$ by Baruah and Das \cite{Baruah-Das}; the case $(\ell,k)=(3,6)$ by Murugan and Fathima \cite{Murugan-Fathima}; and both $(\ell,k)=(2,3)$ and $(3,3)$ by Nadji and Ahmia \cite{Nadji-Ahmia}. Additionally, Rahman and Saikia \cite{Rahman-Saikia} examined the cases $(\ell,k)=(5,3)$ and $(5,5)$, while Vidya \cite{Vidya} considered cases with $k=3$ and $\ell\in \{2,4,10,20\}$.

In a recent work \cite{Nath-Saikia-Sarma}, Nath, Saikia, and Sarma analyzed the cases $(\ell,k)=(2,3)$ and $(4,3)$, and in some instances for general $(\ell,k)$. 
In particular, they proved  
\cite[Theorem 1.3]{Nath-Saikia-Sarma} that for $n\geq 0$ and $\alpha\geq 0$, we have
 \begin{align}     
\label{TT29n24}T_{2}\left(3^{4\alpha+2}n+\sum_{i=0}^{2\alpha}3^{2i}+3^{4\alpha+1}\right)&\equiv 0\pmod{24}, \\
\label{TTT29n24}T_{2}\left(3^{4\alpha+2}n+\sum_{i=0}^{2\alpha}3^{2i}+2\cdot 3^{4\alpha+1}\right)&\equiv 0\pmod{24}.
 \end{align}
They also proved \cite[Theorem 1.6]{Nath-Saikia-Sarma} that if $p\equiv 5 \text{ or } 7\pmod{8}$ and $n,\alpha\geq0$ with $p\nmid n$, then
\begin{equation}\label{1.6mod6}
T_2\left(9p^{2\alpha+1}n+\frac{9p^{2\alpha+2}-1}{8}\right)\equiv0\pmod{6}.\end{equation}
Moreover, their analysis led them to propose the following conjecture: 

\begin{conjecture}[Nath, Saikia, Sarma \cite{Nath-Saikia-Sarma}]\label{conj.nss}
	Let $p\geq 5$ be a prime with $\left( \dfrac{-2}{p} \right)_{L}=-1$ and, let $t$ be a positive integer with $(t,6) = 1$ and $p\mid t$. Then for all $n\geq 0$ and $1\leq j \leq p-1$, we have 
	\begin{equation*}
		T_2\left( 9\cdot t^2n + \frac{9\cdot t^2j}{p} + \frac{57\cdot t^2-1}{8} \right) \equiv 0 \pmod{6}.
	\end{equation*}
\end{conjecture}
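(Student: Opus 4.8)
The plan is to reduce the generating function separately modulo $2$ and modulo $3$ to objects with transparent coefficient support, and then recombine via the Chinese Remainder Theorem. Write $f(q):=\sum_{n\ge 0}T_2(n)q^n=(q^2;q^2)_\infty^3/(q;q)_\infty^3$. From the elementary congruence $(q;q)_\infty^2\equiv(q^2;q^2)_\infty\pmod 2$ one gets $(q;q)_\infty^3\equiv(q;q)_\infty(q^2;q^2)_\infty\pmod 2$, hence
\[
f(q)\equiv\frac{(q^2;q^2)_\infty^2}{(q;q)_\infty}=\psi(q)=\sum_{m\ge 0}q^{m(m+1)/2}\pmod 2 ,
\]
so $T_2(n)$ is odd precisely when $8n+1$ is a perfect square. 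Likewise, the Frobenius congruences $(q;q)_\infty^3\equiv(q^3;q^3)_\infty$ and $(q^2;q^2)_\infty^3\equiv(q^6;q^6)_\infty\pmod 3$ yield
\[
f(q)\equiv\frac{(q^6;q^6)_\infty}{(q^3;q^3)_\infty}\pmod 3 ,
\]
which is a power series in $q^3$; hence $T_2(n)\equiv 0\pmod 3$ whenever $3\nmid n$.

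Combining the two reductions gives the general statement I would record as the main theorem: \emph{if $3\nmid N$ and $8N+1$ is not a perfect square, then $T_2(N)\equiv 0\pmod 6$}. (The argument in fact determines $T_2(n)\bmod 6$ in every case; for $3\mid N$ the mod $3$ part is governed by the number of partitions of $N/3$ into distinct parts, which we do not need here.) Conjecture~\ref{conj.nss} is then the claim that $N:=9t^2n+\frac{9t^2j}{p}+\frac{57t^2-1}{8}$ lies in this good set, which is a routine check. Put $t=ps$, legitimate since $p\mid t$. Integrality of $N$ holds because $t$ is odd, so $t^2\equiv1\pmod 8$ and $8\mid 57t^2-1$. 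Modulo $3$ the first two summands of $N$ are divisible by $9$, while $57t^2-1\equiv-1\pmod 3$ forces $\frac{57t^2-1}{8}\equiv1\pmod 3$; thus $N\equiv1\pmod 3$ and $3\nmid N$. Finally
\[
8N+1=72t^2n+\frac{72t^2j}{p}+57t^2=ps^2\bigl(72pn+72j+57p\bigr),
\]
and $72pn+72j+57p\equiv72j\not\equiv0\pmod p$, because $p\ge5$ is prime (so $p\nmid72$) and $1\le j\le p-1$ (so $p\nmid j$); hence $v_p(8N+1)=1+2v_p(s)$, with $v_p$ the $p$-adic valuation, is odd, so $8N+1$ is not a perfect square. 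Therefore $T_2(N)\equiv0\pmod 6$, and no use is made of the hypothesis $\left(\frac{-2}{p}\right)_{L}=-1$ (nor of the analogous hypothesis behind \eqref{1.6mod6}).

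The substantive part of the argument is entirely in the first paragraph: the point is to recognize that modulo $2$ and modulo $3$ the generating function collapses to the theta series $\psi(q)$ and to a series in $q^3$ respectively, after which everything reduces to $p$-adic valuation bookkeeping. The only place that needs care is checking that the stated conditions on $p$, $t$, $j$ are exactly what make $v_p(8N+1)$ odd; the genuinely new observation --- and the reason the theorem is ``much more general'' than the conjecture --- is that the Legendre-symbol condition in \cite{Nath-Saikia-Sarma}, which presumably arose there from a modular-forms computation, is superfluous.
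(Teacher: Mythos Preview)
Your argument is correct and entirely self--contained, but it follows a genuinely different route from the paper's. The paper does not work modulo $2$ and $3$ separately from scratch; instead it quotes the congruences $T_2(9n+4)\equiv T_2(9n+7)\equiv 0\pmod{24}$ already established by Nath, Saikia, and Sarma (these are the $\alpha=0$ cases of \eqref{TT29n24} and \eqref{TTT29n24}), and then simply checks that the argument $9t^2n+\frac{9t^2j}{p}+\frac{57t^2-1}{8}$ lies in the residue class $7\pmod 9$. This is a shorter reduction, and it yields the conjecture with modulus $24$ rather than $6$; moreover it does so \emph{uniformly for every prime $p\ge 5$ dividing $t$}, because the residue class mod $9$ does not see $p$ at all. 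Your approach, by contrast, proves the mod $2$ and mod $3$ parts directly from the collapse of the generating function to $\psi(q)$ and to a series in $q^3$, and then uses the hypothesis $p\mid t$ together with $1\le j\le p-1$ to force $v_p(8N+1)$ odd. The trade--off is clear: you avoid reliance on the cited mod $24$ results (so the proof stands on its own), at the cost of obtaining only mod $6$ and of actually needing the $p$--adic step. Both arguments confirm that the Legendre--symbol hypothesis is superfluous.
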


Motivated by this conjecture, we establish the following theorem as a stronger version of it.

\begin{theorem} \label{thm.nssconj}
    Let $t$ be a positive integers with $\gcd(t,6)=1$. Then, for $n\geq 0$ and $N=33 \text{ or } 57$, we have 
    \begin{equation}\label{T29n24}
        T_2\left(9n+\frac{Nt^2-1}{8}\right)\equiv 0\pmod{24}.
        \end{equation}
\end{theorem}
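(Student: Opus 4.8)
The plan is to pass to the generating function
\[
\sum_{n\ge 0}T_2(n)q^n=\Bigl(\tfrac{f_2}{f_1}\Bigr)^3,\qquad f_j:=(q^j;q^j)_\infty ,
\]
using that a partition is $2$-regular exactly when its parts are distinct, so each of the three components contributes a factor $(-q;q)_\infty=f_2/f_1$. First I would dispose of the shape of the argument: for $\gcd(t,6)=1$ one has $t^2\equiv1\pmod 8$ and $t^2\equiv1\pmod 3$, so (since $33\equiv57\equiv1\pmod 8$) $\frac{Nt^2-1}{8}$ is a non‑negative integer, and (since $33\equiv6$ and $57\equiv3\pmod 9$) a one‑line computation gives $\frac{Nt^2-1}{8}\equiv4\pmod 9$ for $N=33$ and $\equiv7\pmod 9$ for $N=57$. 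Hence every argument $9n+\frac{Nt^2-1}{8}$ lies in the class $4$ or $7$ modulo $9$, and it suffices to prove $T_2(m)\equiv0\pmod{24}$ for all $m\equiv4$ or $7\pmod 9$; taking $t=1$ shows this is in fact equivalent to Theorem~\ref{thm.nssconj}, and it is visibly stronger than Conjecture~\ref{conj.nss}. By the Chinese Remainder Theorem it is enough to treat this modulo $3$ and modulo $8$ separately.

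Modulo $3$ is immediate: $f_1^3\equiv f_3\pmod 3$ gives $(f_2/f_1)^3\equiv f_6/f_3\pmod 3$, a power series in $q^3$, so $T_2(m)\equiv0\pmod 3$ whenever $3\nmid m$. For the modulus $8$, from $f_1^2=f_2\,\varphi(-q)$ with $\varphi(-q)=\sum_{j\in\mathbb Z}(-1)^jq^{j^2}$ and $\varphi(-q)^4\equiv1\pmod 8$ one gets $f_1^8\equiv f_2^4\pmod 8$, whence $(f_2/f_1)^3=f_2^3f_1^5/f_1^8\equiv f_1^5/f_2=f_1^3\,\varphi(-q)\pmod 8$. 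Now Jacobi's identity $f_1^3=\sum_{n\ge0}(-1)^n(2n+1)q^{n(n+1)/2}$ turns the condition $n(n+1)/2+j^2=m$ into $(2n+1)^2+8j^2=8m+1$; writing $r=2n+1$, noting that $8m+1\equiv33$ or $57\pmod{72}$ is not a square modulo $9$ (so no solution has $r=0$ or $j=0$), and using that $(-1)^{(r-1)/2+j}r$ is invariant under $(r,j)\mapsto(\pm r,\pm j)$, I obtain
\[
T_2(m)\equiv\bigl[q^m\bigr]\bigl(f_1^3\varphi(-q)\bigr)=2\!\!\sum_{\substack{r,j>0\\ r^2+8j^2=8m+1}}\!\!(-1)^{(r-1)/2+j}\,r\pmod 8 .
\]
Since $(-1)^{(r-1)/2}r\equiv1\pmod 4$ for every odd $r$, this reduces the whole problem to the assertion that, for $M:=8m+1\equiv33$ or $57\pmod{72}$, the representations $M=r^2+8j^2$ with $r,j>0$ split evenly between those with $j$ even and those with $j$ odd — equivalently, that $(q;q)_\infty(q^2;q^2)_\infty=\psi(q)\varphi(-q)$ has vanishing coefficient at every exponent $\equiv4$ or $7\pmod 9$.

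This last point is the one that requires an idea, and I would prove it by an involution in $\mathbb Z[\sqrt{-2}]$, a principal ideal domain with unit group $\{\pm1\}$. For $M\equiv33$ or $57\pmod{72}$ one has $M\equiv1\pmod 8$, $3\mid M$ and $9\nmid M$. A representation $(r,j)$ with $r,j>0$ corresponds to an element $\alpha=r+2j\sqrt{-2}$ of norm $M$, up to the action of units and of complex conjugation, neither of which alters the parity of $j$. Since $3=(1+\sqrt{-2})(1-\sqrt{-2})$ and $9\nmid M$, exactly one of $1\pm\sqrt{-2}$ divides $\alpha$; say $\alpha=(1+\sqrt{-2})\beta$ with $\beta=u+v\sqrt{-2}$, so that $N(\beta)=M/3\equiv3\pmod 8$, which forces both $u$ and $v$ odd. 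The assignment $\alpha\mapsto(1-\sqrt{-2})\beta$ is then an involution on the set of representations, and because $(1+\sqrt{-2})\beta=(u-2v)+(u+v)\sqrt{-2}$ while $(1-\sqrt{-2})\beta=(u+2v)+(v-u)\sqrt{-2}$, it replaces $j=(u+v)/2$ by $j'=(v-u)/2$; as $j+j'=v$ is odd, it reverses the parity of $j$ and is therefore fixed‑point‑free. Hence the representations pair off into couples of opposite $j$‑parity, the counting statement follows, and $T_2(m)\equiv0\pmod 8$; combined with the modulo‑$3$ statement this yields Theorem~\ref{thm.nssconj}.

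The only genuinely substantive step is the $\mathbb Z[\sqrt{-2}]$ argument of the last paragraph; everything else is routine manipulation of theta series and Euler products. I expect this step can alternatively be phrased as an explicit $3$‑dissection of $(q;q)_\infty(q^2;q^2)_\infty$ — presumably the form a purely $q$‑series treatment would take — but pairing the representations through the two prime factors of $3$ seems the most transparent route.
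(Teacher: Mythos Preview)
Your proof is correct, but it takes a genuinely different route from the paper's.  Both arguments begin with the same reduction: since $\gcd(t,6)=1$ forces $t^2\equiv1\pmod{24}$, the argument $9n+\tfrac{Nt^2-1}{8}$ lies in the residue class $4$ (for $N=33$) or $7$ (for $N=57$) modulo $9$, so the theorem is equivalent to
\[
T_2(9n+4)\equiv T_2(9n+7)\equiv 0\pmod{24}.
\]
At this point the paper simply \emph{quotes} these two congruences as the $\alpha=0$ cases of \eqref{TT29n24}--\eqref{TTT29n24}, which are results of Nath, Saikia, and Sarma cited in the introduction; the paper's proof of Theorem~\ref{thm.nssconj} is thus a one-line reduction to an external theorem.

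You instead prove the two base congruences from scratch.  Modulo $3$ is dispatched by $f_1^3\equiv f_3\pmod 3$ exactly as in the paper's later arguments.  For the modulus $8$ you show $\sum T_2(n)q^n\equiv f_1^3\varphi(-q)\pmod 8$, rewrite the coefficient of $q^m$ as a signed sum over representations $8m+1=r^2+8j^2$, and reduce (via $(-1)^{(r-1)/2}r\equiv1\pmod 4$) to showing that for $m\equiv4,7\pmod 9$ these representations split evenly by the parity of $j$ --- equivalently, that $[q^m](f_1f_2)=0$.  Your involution in $\mathbb Z[\sqrt{-2}]$, swapping the prime factor $1+\sqrt{-2}$ of $3$ for $1-\sqrt{-2}$, is correct and fixed-point-free precisely because $3\,\|\,(8m+1)$ and $M/3\equiv3\pmod 8$ forces $u,v$ odd; this is the genuine content of your argument.

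What each buys: the paper's proof is much shorter but depends on \cite[Theorem~1.3]{Nath-Saikia-Sarma}.  Your proof is self-contained, gives a structural explanation (the exact divisibility of $8m+1$ by $3$) for why the residues $4$ and $7$ modulo $9$ are the relevant ones, and in fact establishes the sharper identity $[q^m](f_1f_2)=0$ for these $m$ rather than merely a congruence.  The $3$-dissection of $f_1f_2$ that you allude to at the end would indeed give an alternative $q$-series phrasing of the same fact.
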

\begin{corollary}
    Conjecture \ref{conj.nss} is true. 
\end{corollary}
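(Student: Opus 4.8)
The plan is to reduce Theorem~\ref{thm.nssconj} to the two ``base'' congruences
\[
T_2(9n+4)\equiv 0\pmod{24}\qquad\text{and}\qquad T_2(9n+7)\equiv 0\pmod{24}\qquad(n\ge 0).
\]
This reduction is elementary: when $\gcd(t,6)=1$ we have $24\mid t^2-1$, and since $3\mid N$ for $N\in\{33,57\}$ we get $72\mid N(t^2-1)$, so
\[
\frac{Nt^2-1}{8}=9\cdot\frac{N(t^2-1)}{72}+\frac{N-1}{8}\equiv \frac{N-1}{8}\pmod 9,
\]
which is $4$ when $N=33$ and $7$ when $N=57$; hence $9n+\frac{Nt^2-1}{8}$ always has the form $9m+4$ or $9m+7$. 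The corollary is then immediate, since in Conjecture~\ref{conj.nss} the hypothesis $p\mid t$ forces $\frac{9t^2j}{p}=9\cdot\frac{t^2}{p}\cdot j$ to be a nonnegative integer divisible by $9$, so the argument of $T_2$ there is $9(\text{integer})+\frac{57t^2-1}{8}$, an instance of \eqref{T29n24} with $N=57$, and $6\mid 24$.

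For the base congruences I would run a $3$-dissection. With $\psi(q)=\sum_{k\ge0}q^{k(k+1)/2}=\frac{(q^2;q^2)_\infty^2}{(q;q)_\infty}$, $\varphi(-q)=\sum_{k\in\mathbb Z}(-1)^kq^{k^2}=\frac{(q;q)_\infty^2}{(q^2;q^2)_\infty}$, and $f(a,b)=\sum_{k\in\mathbb Z}a^{k(k+1)/2}b^{k(k-1)/2}$ Ramanujan's theta function, one has $\sum_{n\ge0}T_2(n)q^n=\frac{(q^2;q^2)_\infty^3}{(q;q)_\infty^3}=\frac{\psi(q)}{\varphi(-q)}$. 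Using the standard $3$-dissections $\psi(q)=f(q^3,q^6)+q\,\psi(q^9)$ and $\varphi(-q)=\varphi(-q^9)-2q\,f(-q^3,-q^{15})$ and rationalizing,
\[
\frac{1}{\varphi(-q)}=\frac{\varphi(-q^9)^2+2q\,f(-q^3,-q^{15})\,\varphi(-q^9)+4q^2 f(-q^3,-q^{15})^2}{\varphi(-q^9)^3-8q^3 f(-q^3,-q^{15})^3},
\]
where the three summands of the numerator and the denominator are all power series in $q^3$. Multiplying by $\psi(q)$, extracting the part supported on exponents $\equiv 1\pmod 3$, and replacing $q^3$ by $q$ yields
\[
\sum_{m\ge0}T_2(3m+1)q^m=\frac{2\,f(q,q^2)f(-q,-q^5)\,\varphi(-q^3)+\psi(q^3)\varphi(-q^3)^2}{\varphi(-q^3)^3-8q\,f(-q,-q^5)^3}.
\]

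The crucial simplification is the identity $f(q,q^2)\,f(-q,-q^5)=(q^3;q^3)_\infty(q^6;q^6)_\infty=\psi(q^3)\varphi(-q^3)$, which drops out after writing both theta functions as infinite products by the Jacobi triple product and splitting $(-q;q)_\infty$ and $(q;q^2)_\infty$ by residues modulo $3$ and $6$. Substituting it collapses the numerator to $3\,\psi(q^3)\varphi(-q^3)^2$, so
\[
\sum_{m\ge0}T_2(3m+1)q^m=\frac{3\,\psi(q^3)\varphi(-q^3)^2}{\varphi(-q^3)^3-8q\,f(-q,-q^5)^3}=\frac{3\,\psi(q^3)}{\varphi(-q^3)}\sum_{k\ge0}\frac{8^k\,q^k\,f(-q,-q^5)^{3k}}{\varphi(-q^3)^{3k}} .
\]
Every term with $k\ge1$ is an integral power series multiplied by $8^k\cdot 3$, hence is $\equiv 0\pmod{24}$, while the $k=0$ term $\frac{3\psi(q^3)}{\varphi(-q^3)}$ is a power series in $q^3$ and so contributes nothing to the coefficient of $q^m$ when $3\nmid m$. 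Therefore $T_2(3m+1)\equiv 0\pmod{24}$ whenever $3\nmid m$; taking $m=3n+1$ and $m=3n+2$ gives the two base congruences and completes the proof.

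I expect the main obstacle to be organizing the $3$-dissection so that it lands in exactly this shape, and in particular verifying the identity $f(q,q^2)f(-q,-q^5)=(q^3;q^3)_\infty(q^6;q^6)_\infty$: it is precisely this collapse that simultaneously produces the global factor of $3$ (merging the $2q$ from the dissection of $\varphi(-q)$ with the $q\,\psi(q^9)$ branch of $\psi$) and forces every non-$q^3$ contribution to appear multiplied by a power of $8$.
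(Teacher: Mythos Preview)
Your reduction of the Corollary to Theorem~\ref{thm.nssconj}, and of Theorem~\ref{thm.nssconj} to the two base congruences $T_2(9n+4)\equiv T_2(9n+7)\equiv 0\pmod{24}$, is correct and is exactly what the paper does: the paper's proof of the Corollary is the one-line substitution $n\mapsto t^2n+t^2j/p$ with $N=57$, and its proof of Theorem~\ref{thm.nssconj} is the same residue-modulo-$9$ computation you give.

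Where you diverge is in supplying an independent proof of the base congruences. The paper does not prove these; it simply quotes them as the $\alpha=0$ case of \eqref{TT29n24} and \eqref{TTT29n24}, which are results of Nath, Saikia, and Sarma. Your $3$-dissection argument is a genuine addition, and it checks out: the dissections of $\psi(q)$ and $\varphi(-q)$ are standard, the product identity $f(q,q^2)\,f(-q,-q^5)=f_3f_6=\psi(q^3)\varphi(-q^3)$ follows from the Jacobi triple product after the residue splitting you indicate, and since $\varphi(-q^3)$ has constant term $1$ its reciprocal is an integral power series, so the geometric-series expansion legitimately isolates the factor $3\cdot 8^k$ on every term with $k\ge 1$. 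This makes your proof self-contained, at the cost of doing considerably more work than the paper, which is content to cite the base congruences from \cite{Nath-Saikia-Sarma}.
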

\begin{proof}
    Using $N=57$ and replacing $n$ by $t^2n+\cfrac{t^2j}{p}$ in \eqref{T29n24} completes the proof.
\end{proof}

With the goal of extending such congruences even further, we prove the following infinite family of congruences modulo 8 in extremely elementary fashion. 

\begin{theorem}\label{thm.p357mod8}
	Let $p\equiv3, 5 \text{ or } 7\mod 8$ be a prime. Then, for $n,\alpha\geq 0$ with $p\nmid n$, we have $$T_2\left(p^{2\alpha+1}n+\frac{p^{2\alpha+2}-1}{8}\right)\equiv0\pmod{8}.$$
In addition, if $3\nmid n$, then 
\begin{equation}\label{mod24}
T_2\left(p^{2\alpha+1}n+\frac{p^{2\alpha+2}-1}{8}\right)\equiv0\pmod{24}.\end{equation}
\end{theorem}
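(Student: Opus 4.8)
The plan is to extract the congruences from the identity $\sum_{n\ge 0}T_2(n)q^n=\big((-q;q)_\infty\big)^3=f_2^3/f_1^3$, where $f_k:=(q^k;q^k)_\infty$, by reducing this series modulo $8$ and modulo $3$ and then interpreting the result as a weighted count of representations of $8n+1$ by the binary form $x^2+2y^2$.

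For the modulus $8$, I would start from the elementary congruence $(1-X)^8\equiv(1-X^2)^4\pmod 8$, which gives $f_1^8\equiv f_2^4\pmod 8$; since $f_1,f_2$ are units in $\mathbb Z/8\mathbb Z[[q]]$ this yields
\[
\sum_{n\ge 0}T_2(n)q^n=\frac{f_1^5 f_2^3}{f_1^8}\equiv\frac{f_1^5}{f_2}=f_1^3\cdot\frac{f_1^2}{f_2}=f_1^3\,\varphi(-q)\pmod 8,
\]
where $\varphi(q)=\sum_{n\in\mathbb Z}q^{n^2}$ and $\varphi(-q)=f_1^2/f_2$. Combining Jacobi's identity $f_1^3=\sum_{j\ge 0}(-1)^j(2j+1)q^{j(j+1)/2}$ with $\varphi(-q)=\sum_{k\in\mathbb Z}(-1)^kq^{k^2}$ and multiplying exponents by $8$ and adding $1$, one obtains
\[
T_2(n)\equiv\!\!\sum_{\substack{j\ge 0,\ k\in\mathbb Z\\ (2j+1)^2+8k^2=8n+1}}\!\!(-1)^{j+k}(2j+1)\pmod 8 .
\]
Writing $N=p^{2\alpha+1}n+\tfrac{p^{2\alpha+2}-1}{8}$ for the argument in the theorem, so that $M:=8N+1=p^{2\alpha+1}(8n+p)$ has $v_p(M)=2\alpha+1$ odd (as $p\nmid n$ forces $p\nmid 8n+p$), $M$ is not a perfect square; hence each representation has $k\ne 0$ and pairs with $(j,-k)$ with the same summand, and since $(-1)^j(2j+1)\equiv 1\pmod 4$ for all $j\ge 0$ the sum collapses to $T_2(N)\equiv 2D\pmod 8$ with
\[
D:=\!\!\sum_{\substack{j\ge 0,\ k\ge 1\\ (2j+1)^2+8k^2=M}}\!\!(-1)^k,\qquad 4D=\!\!\sum_{\substack{x,k\in\mathbb Z\\ x^2+8k^2=M}}\!\!(-1)^k .
\]
So it suffices to show $D=0$, i.e.\ that this signed count of solutions of $x^2+2(2k)^2=M$ vanishes.

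I would then split on the quadratic residue of $-2$. If $\big(\tfrac{-2}{p}\big)=-1$ (that is, $p\equiv 5,7\pmod 8$), then $p$ is inert in $\mathbb Z[\sqrt{-2}]$: from $p\mid x^2+2(2k)^2$ one gets $p\mid x$ and $p\mid k$, so $M=p^2\big((x/p)^2+8(k/p)^2\big)$, and repeatedly stripping factors of $p^2$ contradicts $v_p(M)$ being odd; hence there are no solutions and $D=0$. If instead $p\equiv 3\pmod 8$, then $\big(\tfrac{-1}{p}\big)=-1$, $p$ splits as $p=a^2+2b^2=\pi\bar\pi$ with $\pi=a+b\sqrt{-2}$, and a residue check shows $a$ and $b$ are both odd. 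Now solutions genuinely occur, and the argument must be a cancellation: a solution of $M=x^2+8k^2$ is an element $x+2k\sqrt{-2}$ of norm $M$; fixing the part of the associated ideal coprime to $p$, the admissible ideals of norm $M$ form a chain $\mathfrak p^{\,i}\bar{\mathfrak p}^{\,2\alpha+1-i}\mathfrak b$ with $0\le i\le 2\alpha+1$, and multiplication by $\pi/\bar\pi$ carries each solution to the next. Because $\pi^2=(a^2-2b^2)+2ab\sqrt{-2}$ with $ab$ odd, this operation flips the parity of the $\sqrt{-2}$-coordinate --- hence the sign $(-1)^k$ --- at each step; the chain has even length $2\alpha+2$, so its signed contributions cancel, and summing over the finitely many coprime parts gives $D=0$. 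Thus $T_2(N)\equiv 0\pmod 8$ in all three cases. I expect this cancellation for $p\equiv 3\pmod 8$ --- where, unlike $p\equiv 5,7$, solutions are present --- to be the delicate part: making the chain and its length precise, and checking the parity bookkeeping (equivalently, that the prime ideal of $\mathbb Z[\sqrt{-2}]$ above $p$ has order $2$ in the ray class group modulo $2$), is the crux.

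Finally, for the modulus $3$ and the mod $24$ statement, I would use $(1-X)^3\equiv 1-X^3\pmod 3$, giving $f_1^3\equiv f_3$ and $f_2^3\equiv f_6\pmod 3$, hence $\sum_{n\ge 0}T_2(n)q^n\equiv f_6/f_3=\prod_{m\ge 1}(1+q^{3m})\pmod 3$, a power series in $q^3$; therefore $3\mid T_2(m)$ whenever $3\nmid m$. It then remains to observe that when $3\nmid n$ the argument $N$ is not divisible by $3$: one has $\tfrac{p^{2\alpha+2}-1}{8}\equiv 0\pmod 3$ and $p^{2\alpha+1}\equiv p\pmod 3$ if $p\ne 3$ (so $N\equiv pn\not\equiv 0$), while $N\equiv 1\pmod 3$ if $p=3$. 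Combining this with the congruence modulo $8$ yields $T_2(N)\equiv 0\pmod{24}$ under the hypothesis $3\nmid n$, which completes the plan.
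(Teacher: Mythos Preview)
Your argument is correct, and the reduction modulo $3$ is essentially identical to the paper's. For the modulus $8$ you reach the same intermediate congruence $\sum T_2(n)q^n\equiv f_1^5/f_2\pmod 8$, but you factor it as $f_1^3\cdot (f_1^2/f_2)$ and interpret the coefficients via $8N+1=(2j+1)^2+8k^2$, whereas the paper factors it as $(f_1^5/f_2^2)\cdot f_2$ and is led to $24N+3=(6m+1)^2+2(6k-1)^2$. For $p\equiv 5,7\pmod 8$ the two routes are equivalent, both resting on $\bigl(\tfrac{-2}{p}\bigr)=-1$ so that no representations exist.

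The genuine divergence is at $p\equiv 3\pmod 8$. You stay with the single form $x^2+2y^2$, accept that representations do occur, and kill the signed count $D$ by an ideal-chain argument in $\mathbb Z[\sqrt{-2}]$: writing $p=\pi\bar\pi$ with $\pi=a+b\sqrt{-2}$ and $a,b$ odd, multiplication by $\pi/\bar\pi$ shifts along the chain $\mathfrak p^{\,i}\bar{\mathfrak p}^{\,2\alpha+1-i}\mathfrak b$ and flips the parity of $k$ (your computation $k'\equiv k+x\equiv k+1\pmod 2$ checks out), so the $2\alpha+2$ terms cancel in pairs. The paper instead sidesteps this entirely: it substitutes $q\mapsto -q$, obtaining $\sum T_2(n)(-q)^n\equiv f_1^3\cdot(f_2^2/f_4)\pmod 8$, which leads to $8N+1=(2m+1)^2+(4k)^2$; since $\bigl(\tfrac{-1}{p}\bigr)=-1$ for $p\equiv 3\pmod 4$, there are again \emph{no} representations at all, and no cancellation argument is needed. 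Your approach is more uniform (one $q$-expansion for all residues) but leans on unique factorization in $\mathbb Z[\sqrt{-2}]$ and a parity bookkeeping step; the paper's $q\to -q$ trick is slicker and keeps the whole proof at the level of ``the relevant quadratic form has no solutions because a Legendre symbol is $-1$.''
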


Lastly, we note that \eqref{1.6mod6} holds modulo 24, and it also holds for primes $p\equiv3\pmod{8}$.

\begin{corollary}\label{cor.9pmod24}
    Let $p\equiv 3, 5 \text{ or } 7\mod 8$ be a prime and $p\ne 3$. Then, for $n,\alpha\geq 0$ with $p\nmid n$, we have $$T_2\left(9p^{2\alpha+1}n+\frac{9p^{2\alpha+2}-1}{8}\right)\equiv0\pmod{24}.$$
\end{corollary}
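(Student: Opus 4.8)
The plan is to deduce Corollary \ref{cor.9pmod24} directly from the ``mod $24$'' half of Theorem \ref{thm.p357mod8} by a change of variable, so the argument will be short. First I would rewrite the progression appearing in the corollary in the shape that Theorem \ref{thm.p357mod8} handles. A one-line computation gives
\[
9p^{2\alpha+1}n+\frac{9p^{2\alpha+2}-1}{8}
= p^{2\alpha+1}(9n+p)+\frac{p^{2\alpha+2}-1}{8},
\]
since $9p^{2\alpha+2}-1-(p^{2\alpha+2}-1)=8p^{2\alpha+2}$ and the extra $p^{2\alpha+1}\cdot p$ absorbs into the linear term. So if I set $m:=9n+p$, the left-hand side is exactly $p^{2\alpha+1}m+\frac{p^{2\alpha+2}-1}{8}$, i.e.\ the argument occurring in Theorem \ref{thm.p357mod8} with the same $\alpha$ and with $n$ replaced by $m$.

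Next I would verify the two hypotheses needed to invoke the mod $24$ conclusion, namely $p\nmid m$ and $3\nmid m$. For the first, $p\mid m=9n+p$ would force $p\mid 9n$; but $p$ is prime, $p\ne 3$ (so $p\nmid 9$), and $p\nmid n$ by hypothesis, hence $p\nmid m$. For the second, $3\mid 9n$, so $3\mid m$ would force $3\mid p$, impossible since $p$ is a prime other than $3$. (One also checks the routine integrality point: $p$ odd gives $p^2\equiv 1\pmod 8$, so $\tfrac{p^{2\alpha+2}-1}{8}$ and $\tfrac{9p^{2\alpha+2}-1}{8}$ are integers.)

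Finally, applying the second statement of Theorem \ref{thm.p357mod8} with $n\mapsto m=9n+p$ yields
\[
T_2\!\left(p^{2\alpha+1}m+\frac{p^{2\alpha+2}-1}{8}\right)\equiv 0\pmod{24},
\]
which by the displayed identity is precisely $T_2\!\left(9p^{2\alpha+1}n+\frac{9p^{2\alpha+2}-1}{8}\right)\equiv 0\pmod{24}$. There is no substantial obstacle; the only thing to be careful about is that the corollary genuinely uses both $p\ne 3$ and $p\nmid n$, and these are exactly what make the coprimality checks $p\nmid m$ and $3\nmid m$ succeed — dropping either one breaks the reduction. I would also remark that this argument simultaneously recovers \eqref{1.6mod6} and strengthens it to the modulus $24$ and to the additional primes $p\equiv 3\pmod 8$, as announced in the sentence preceding the corollary.
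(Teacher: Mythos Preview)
Your argument is correct and is exactly the paper's approach: the authors simply say ``Replacing $n$ by $9n+p$ in \eqref{mod24} gives the result,'' and you have spelled out that substitution together with the verifications that $p\nmid 9n+p$ and $3\nmid 9n+p$.
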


\begin{proof}
Replacing $n$ by $9n+p$ in \eqref{mod24} gives the result.
\end{proof}

After collecting a number of necessary mathematical tools in Section \ref{sec:preliminaries}, we provide elementary proofs of Theorem \ref{thm.nssconj} and Theorem \ref{thm.p357mod8} in Section \ref{sec:ourproofs}, and we share several closing comments in Section \ref{sec:closingthoughts}.

\section{Preliminaries}\label{sec:preliminaries}

Our proofs of the aforementioned theorems are entirely elementary, relying solely on generating function manipulations and classical $q$-series results. In this section, we present several elementary facts, obtained from elementary $q$-series analysis, that will be utilized in the course of our proofs below.

We recall the $q$-Pochhammer symbol defined by
\[
(a;q)_\infty := \prod_{i=0}^{\infty} (1 - aq^i)
\]
and denote
\[
f_k := (q^k;q^k)_\infty.
\]
With this notation, it is clear that the generating function for $T_{\ell,k}(n)$ is given by
\[
\sum_{n \ge 0} T_{\ell,k}(n)q^n = \frac{f_\ell^k}{f_1^k},
\]
and, in particular, the generating function for $T_\ell(n)$ is
\[
\sum_{n \ge 0} T_\ell(n)q^n = \frac{f_\ell^3}{f_1^3}.
\]

We now collect the results which will be necessary in our work below.

\begin{lemma}  We have
    \begin{equation}\label{f1}
        f_1=\sum_{m\in\mathbb{Z}}(-1)^mq^{m(3m-1)/2}.
    \end{equation}
\end{lemma}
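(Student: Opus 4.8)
The identity \eqref{f1} is Euler's Pentagonal Number Theorem, and the plan is to deduce it as a specialization of the Jacobi triple product identity
\[
\sum_{m \in \mathbb{Z}} z^m q^{m^2} \;=\; \prod_{n \ge 1} (1 - q^{2n})(1 + zq^{2n-1})(1 + z^{-1}q^{2n-1}),
\]
which I would take as a known classical fact. First I would pass to the variable $q^{1/2}$ (a harmless formal substitution, since every exponent produced below is an integer): replacing $q$ by $q^{3/2}$ and setting $z = -q^{-1/2}$, the right-hand side becomes
\[
\prod_{n \ge 1} (1 - q^{3n})(1 - q^{3n-1})(1 - q^{3n-2}) \;=\; \prod_{n \ge 1}(1 - q^n) \;=\; f_1,
\]
after regrouping the three arithmetic progressions modulo $3$. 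On the left-hand side the exponent of $q$ becomes $\frac{3}{2}m^2 - \frac12 m = \frac{m(3m-1)}{2}$ and the coefficient becomes $(-1)^m$, yielding exactly $\sum_{m \in \mathbb{Z}} (-1)^m q^{m(3m-1)/2}$, as claimed.

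Alternatively, one can give a purely combinatorial proof. Expanding $f_1 = \prod_{n \ge 1}(1 - q^n)$ as a signed sum over partitions into distinct parts, a partition of $N$ into $j$ distinct parts contributes $(-1)^j q^N$. Franklin's involution then pairs almost all such partitions with one of the opposite sign — by transferring the smallest part to, or a "staircase" segment along the largest part from, the Young diagram — so that the contributions cancel, leaving only the fixed points, which are indexed by the generalized pentagonal numbers $\frac{m(3m-1)}{2}$ and carry sign $(-1)^m$.

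The only delicacy in either route is bookkeeping: in the Jacobi-triple-product argument one must correctly match the three residue classes modulo $3$ and track the half-integer exponents, and in the combinatorial argument one must carefully isolate the two boundary configurations on which Franklin's two moves collide. Neither constitutes a genuine obstacle, so in the write-up I would most likely simply record \eqref{f1} as Euler's classical Pentagonal Number Theorem with a reference and proceed, since it is used here only as an off-the-shelf $q$-series tool.
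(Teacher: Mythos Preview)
Your proposal is correct: both the Jacobi triple product specialization and Franklin's involution are standard, valid routes to Euler's Pentagonal Number Theorem, and your bookkeeping is accurate. The paper itself does even less than your final suggestion---it simply cites \cite[Section 1.6]{Hirschhorn} without any argument---so your write-up would, if anything, be more informative than what appears there.
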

\begin{proof}
    A proof of this identity can be found in \cite[Section 1.6]{Hirschhorn}.
\end{proof}
\begin{lemma}   We have
\begin{equation}\label{f13}
    f_1^3=\sum_{m\ge 0}(-1)^m(2m+1)q^{m(m+1)/2}.
\end{equation}
\end{lemma}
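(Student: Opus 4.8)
The identity \eqref{f13} is the classical Jacobi identity for $f_1^3=(q;q)_\infty^3$, and the plan is to deduce it from the Jacobi triple product identity, which we may cite from \cite{Hirschhorn} (alongside \eqref{f1}, which is itself a special case) in the form
\[
\prod_{n\ge 1}(1-q^n)(1-zq^{n-1})(1-z^{-1}q^n) = \sum_{n\in\mathbb{Z}}(-1)^n q^{n(n-1)/2}\, z^n .
\]
First I would extract the $n=1$ factor from the middle product by writing $\prod_{n\ge1}(1-zq^{n-1}) = (1-z)\prod_{n\ge1}(1-zq^{n})$; the left-hand side thereby becomes $(1-z)$ times a product whose value at $z=1$ is $f_1^3$, so the identity reads
\[
(1-z)\prod_{n\ge 1}(1-q^n)(1-zq^{n})(1-z^{-1}q^n) = \sum_{n\in\mathbb{Z}}(-1)^n q^{n(n-1)/2}\, z^n .
\]

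Next I would symmetrize the bilateral sum on the right. Since $n(n-1)/2=(1-n)(-n)/2$, the indices $n$ and $1-n$ carry the same power of $q$, so, using $(-1)^{1-n}=-(-1)^n$, pairing them off (with $n$ running over $n\ge1$) collapses the sum to
\[
\sum_{n\ge 1}(-1)^n q^{n(n-1)/2}\bigl(z^n - z^{1-n}\bigr).
\]
Dividing both sides of the second displayed identity by $1-z$ and invoking the elementary factorization $\dfrac{z^n - z^{1-n}}{1-z} = -z^{1-n}\bigl(1+z+\cdots+z^{2n-2}\bigr)$ makes both sides regular at $z=1$; letting $z\to1$ then sends the left-hand side to $f_1^3$ and the general term on the right to $-(2n-1)(-1)^n q^{n(n-1)/2}$. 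Reindexing by $m=n-1$ produces exactly $\sum_{m\ge0}(-1)^m(2m+1)q^{m(m+1)/2}$, which is \eqref{f13}.

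The only point that requires any care is the termwise passage to the limit $z\to1$. Working in the ring $\mathbb{Z}[z,z^{-1}][[q]]$ this is immediate, since the coefficient of each fixed power of $q$ on either side is a Laurent polynomial in $z$ into which we may simply substitute $z=1$; alternatively, for fixed $|q|<1$ the series in $z$ obtained after dividing by $1-z$ converges uniformly in a neighborhood of $z=1$ thanks to the super-exponential decay of the factors $q^{n(n-1)/2}$, which again legitimizes evaluation at $z=1$. Everything else is routine bookkeeping. A variant that sidesteps the limit entirely is to differentiate the second displayed identity with respect to $z$ and then set $z=1$: the factor $1-z$ gives a simple zero, so only its derivative survives on the left, and the same pairing-and-reindexing of the right-hand side finishes the argument with a purely finite computation.
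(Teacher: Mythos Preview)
Your derivation is correct: the Jacobi triple product in the form $(q;q)_\infty(z;q)_\infty(q/z;q)_\infty=\sum_{n\in\mathbb{Z}}(-1)^n q^{n(n-1)/2}z^n$ is standard, the pairing $n\leftrightarrow 1-n$ is exactly the right symmetrization, the factorization $\dfrac{z^n-z^{1-n}}{1-z}=-z^{1-n}(1+z+\cdots+z^{2n-2})$ is correct, and the limit $z\to 1$ followed by the reindexing $m=n-1$ produces \eqref{f13} as claimed. Your remarks on justifying the termwise limit (either formally in $\mathbb{Z}[z,z^{-1}][[q]]$ or analytically via the decay of $q^{n(n-1)/2}$) are appropriate and sufficient.

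The only thing to note is that the paper itself does not prove this lemma at all: it simply cites \cite[(1.7.1)]{Hirschhorn}. What you have written is in fact the classical argument one finds in that reference, so you have supplied the proof that the paper merely points to. In that sense there is no genuine difference in approach to compare; you have simply been more explicit than the paper chose to be.
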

\begin{proof}
    This identity can be found in \cite[(1.7.1)]{Hirschhorn}.
\end{proof}

\begin{lemma}  We have
  \begin{equation}\label{f15/f22}
        \frac{f_1^5}{f_2^2}=\sum_{m\in\mathbb{Z}}(6m+1)q^{m(3m+1)/2}.
    \end{equation}  
\end{lemma}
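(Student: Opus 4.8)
The plan is to deduce \eqref{f15/f22} from the quintuple product identity by differentiating in an auxiliary variable and then specializing. Keeping the notation $(a;q)_\infty=\prod_{i\ge0}(1-aq^i)$, I would start from the form of the quintuple product identity
\[
(q;q)_\infty\,(qz;q)_\infty\,(z^{-1};q)_\infty\,(qz^{2};q^{2})_\infty\,(qz^{-2};q^{2})_\infty=\sum_{n\in\mathbb{Z}}q^{n(3n+1)/2}\bigl(z^{3n}-z^{-3n-1}\bigr),
\]
an identity in $\mathbb{Z}[z,z^{-1}][[q]]$ (see, e.g., \cite{Hirschhorn}). Both sides vanish at $z=1$: on the right every summand $z^{3n}-z^{-3n-1}$ does, and on the left the factor $(z^{-1};q)_\infty$ contributes the simple zero $1-z^{-1}$. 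So I would differentiate both sides with respect to $z$ and then set $z=1$.

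Differentiating the right-hand side termwise gives $\sum_{n\in\mathbb{Z}}q^{n(3n+1)/2}\bigl(3n\,z^{3n-1}+(3n+1)z^{-3n-2}\bigr)$, which at $z=1$ becomes $\sum_{n\in\mathbb{Z}}(6n+1)q^{n(3n+1)/2}$, exactly the right side of \eqref{f15/f22}. For the left-hand side I would write $(z^{-1};q)_\infty=(1-z^{-1})\,(qz^{-1};q)_\infty$, so that the left side equals $(1-z^{-1})\,Q(z)$ with
\[
Q(z)=(q;q)_\infty\,(qz;q)_\infty\,(qz^{-1};q)_\infty\,(qz^{2};q^{2})_\infty\,(qz^{-2};q^{2})_\infty,
\]
a product well-defined at $z=1$ (in each $q$-degree it is a Laurent polynomial in $z$). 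By the product rule its $z$-derivative at $z=1$ equals $\left.\frac{d}{dz}(1-z^{-1})\right|_{z=1}\cdot Q(1)=Q(1)$, and specializing the Pochhammer symbols,
\[
Q(1)=(q;q)_\infty^{3}\,(q;q^{2})_\infty^{2}=f_1^{3}\left(\frac{f_1}{f_2}\right)^{2}=\frac{f_1^{5}}{f_2^{2}}.
\]
Equating the two derivatives gives \eqref{f15/f22}.

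I expect the only real obstacle to be clerical, not mathematical: many superficially different normalizations of the quintuple product identity appear in the literature, and the argument requires one in which the simple zero at $z=1$ sits in the factor $(z^{-1};q)_\infty$, so that $\left.\frac{d}{dz}(1-z^{-1})\right|_{z=1}=+1$, with no stray sign or power of $z$ on the right-hand side; an incorrect choice flips the overall sign of the answer. With a correct form fixed, the differentiation, the evaluation at $z=1$, and the simplification of the $q$-Pochhammer products are routine. Alternatively, and in the spirit of how \eqref{f1} and \eqref{f13} are handled, \eqref{f15/f22} is classical and may simply be quoted from \cite{Hirschhorn}.
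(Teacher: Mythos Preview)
Your argument is correct: the quintuple product identity in the form you state does have a simple zero at $z=1$ coming from the factor $1-z^{-1}$, the termwise differentiation of the right-hand side at $z=1$ gives $\sum_{n\in\mathbb{Z}}(6n+1)q^{n(3n+1)/2}$, and the product rule on the left yields $Q(1)=f_1^3(q;q^2)_\infty^2=f_1^5/f_2^2$ exactly as you compute. Your caveat about normalizations is well placed but your chosen form is the right one.

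The paper itself does not prove the identity at all; it simply cites \cite[Equation (10.7.3)]{Hirschhorn}, which is precisely the alternative you mention at the end. So your proposal strictly subsumes the paper's ``proof'' while also supplying a self-contained derivation. The differentiation-of-quintuple-product route is in fact the standard way this identity is established (and is essentially what Hirschhorn does), so there is no real divergence in mathematical content---you have just unpacked the citation.
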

\begin{proof}
    This identity appears in \cite[Equation (10.7.3)]{Hirschhorn}.
\end{proof}
\begin{lemma}  We have
\begin{equation}\label{f1-q}
    (-q;-q)_\infty=\frac{f_2^3}{f_1 f_4}.
\end{equation}
\end{lemma}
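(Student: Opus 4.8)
The plan is to evaluate the product on the left directly by separating its factors according to parity. Writing out the definition,
\[
(-q;-q)_\infty = \prod_{i\ge 0}\bigl(1-(-q)(-q)^i\bigr) = \prod_{i\ge 0}\bigl(1-(-1)^{i+1}q^{i+1}\bigr) = \prod_{n\ge 1}\bigl(1-(-1)^{n}q^{n}\bigr),
\]
so the factor indexed by $n$ is $1-q^n$ when $n$ is even and $1+q^n$ when $n$ is odd. Splitting accordingly gives
\[
(-q;-q)_\infty = \Bigl(\prod_{m\ge 1}(1-q^{2m})\Bigr)\Bigl(\prod_{m\ge 1}(1+q^{2m-1})\Bigr) = f_2\prod_{m\ge 1}(1+q^{2m-1}),
\]
which reduces the claim to establishing that $\prod_{m\ge 1}(1+q^{2m-1}) = f_2^2/(f_1 f_4)$.

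For that identity I would invoke Euler's classical product formula $\prod_{m\ge 1}(1+q^m) = 1/(q;q^2)_\infty = f_2/f_1$ (see \cite{Hirschhorn}), together with its image under $q\mapsto q^2$, namely $\prod_{m\ge 1}(1+q^{2m}) = f_4/f_2$. Since $\prod_{m\ge 1}(1+q^m)$ factors as the product of its even-indexed and odd-indexed terms, dividing yields
\[
\prod_{m\ge 1}(1+q^{2m-1}) = \frac{\prod_{m\ge 1}(1+q^m)}{\prod_{m\ge 1}(1+q^{2m})} = \frac{f_2/f_1}{f_4/f_2} = \frac{f_2^2}{f_1 f_4}.
\]
Substituting this back into the expression above gives $(-q;-q)_\infty = f_2\cdot f_2^2/(f_1 f_4) = f_2^3/(f_1 f_4)$, completing the proof.

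All manipulations are formal identities in $\mathbb{Z}[[q]]$, so convergence is not an issue. The only step that demands care is the sign bookkeeping when expanding $(-q;-q)_\infty$: one must confirm that $(-q)(-q)^i = (-1)^{i+1}q^{i+1}$ so that the even/odd split is exactly as stated; beyond this there is no genuine obstacle, as everything rests on standard Euler product identities.
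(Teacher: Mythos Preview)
Your proof is correct and is essentially the mirror image of the paper's argument: you start from $(-q;-q)_\infty$, split by parity, and build up to $f_2^3/(f_1f_4)$, whereas the paper starts from $f_2^3/(f_1f_4)$, writes it in Pochhammer notation, and successively simplifies down to $(-q;-q)_\infty$. The core content is identical---both rely on the parity decomposition $\prod_{n\ge1}(1-(-q)^n)=(q^2;q^2)_\infty\,(-q;q^2)_\infty$ together with the Eulerian identity $(1-x^2)/(1-x)=1+x$ applied termwise (which is exactly what underlies your invocation of $\prod(1+q^m)=f_2/f_1$ and the paper's step $(q^2;q^4)_\infty/(q;q^2)_\infty=(-q;q^2)_\infty$).
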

\begin{proof} Note that 
\begin{align*}
\frac{f_2^3}{f_1f_4}
&=\frac{(q^2;q^2)_\infty^3}{(q;q)_\infty(q^4;q^4)_\infty} \\
&=\frac{(q^2;q^2)_\infty^2}{(q;q^2)_\infty(q^4;q^4)_\infty} \\
&=\frac{(q^2;q^2)_\infty(q^2;q^4)_\infty}{(q;q^2)_\infty}\\
&=(q^2;q^2)_\infty(-q;q^2)_\infty\\
&= (-q;-q)_\infty.
\end{align*}
\end{proof}
\begin{lemma}\label{lemma:f12overf21}  We have
\begin{equation}\label{f12/f2}
\frac{f_1^2}{f_2}=\sum_{n\in\mathbb{Z}}(-1)^nq^{n^2}=1+2\sum\limits_{n\ge 1}(-1)^nq^{n^2}.\end{equation}
\end{lemma}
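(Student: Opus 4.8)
The plan is to prove the identity $\frac{f_1^2}{f_2}=\sum_{n\in\mathbb{Z}}(-1)^n q^{n^2}$ by recognizing both sides as instances of the Jacobi triple product. Recall the Jacobi triple product identity in the form
\[
\sum_{n\in\mathbb{Z}} z^n q^{n^2} = \prod_{m\ge 1}(1-q^{2m})(1+zq^{2m-1})(1+z^{-1}q^{2m-1}).
\]
Specializing $z=-1$ collapses the two linear factors $(1+zq^{2m-1})(1+z^{-1}q^{2m-1})$ into $(1-q^{2m-1})^2$, so the right-hand side becomes $\prod_{m\ge 1}(1-q^{2m})(1-q^{2m-1})^2$. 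Thus the combinatorial sum equals $\prod_{m\ge 1}(1-q^{2m})\cdot\left(\prod_{m\ge 1}(1-q^{2m-1})\right)^2$.

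First I would rewrite this product in terms of the $f_k$ notation. The full product $\prod_{m\ge 1}(1-q^m) = f_1$ factors as $\prod_{m\ge 1}(1-q^{2m-1})\cdot\prod_{m\ge 1}(1-q^{2m})$, i.e. $(q;q^2)_\infty = f_1/f_2$, while $\prod_{m\ge 1}(1-q^{2m}) = f_2$. Substituting gives
\[
\prod_{m\ge 1}(1-q^{2m})(1-q^{2m-1})^2 = f_2 \cdot \left(\frac{f_1}{f_2}\right)^2 = \frac{f_1^2}{f_2},
\]
which is exactly the desired left-hand side. The second equality in the statement, namely $\sum_{n\in\mathbb{Z}}(-1)^n q^{n^2} = 1 + 2\sum_{n\ge 1}(-1)^n q^{n^2}$, is immediate by pairing the terms for $n$ and $-n$ (since $q^{n^2}=q^{(-n)^2}$ and $(-1)^n=(-1)^{-n}$) and isolating the $n=0$ term.

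I do not anticipate a genuine obstacle here; the only thing to be careful about is citing the Jacobi triple product in a consistent normalization and checking the exponent bookkeeping when converting between $(q;q^2)_\infty$-type products and the $f_k$ symbols. Alternatively, if one prefers to avoid invoking Jacobi's triple product directly, this identity already appears in Hirschhorn's book (it is the classical theta identity $\varphi(-q) = f_1^2/f_2$), and one could simply cite \cite{Hirschhorn} as was done for the preceding lemmas; but the triple-product derivation above is short and self-contained, so I would include it.
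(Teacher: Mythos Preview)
Your proof via the Jacobi triple product is correct and complete. The paper itself does not give an argument here; it simply cites \cite[(1.5.8)]{Hirschhorn}, exactly the alternative you mention at the end of your proposal. So your derivation is more self-contained than what the paper offers, and in fact the triple-product specialization you wrote out is essentially the argument one finds behind that citation. Either route is fine; your version has the advantage of not sending the reader to an external reference for a three-line computation.
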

\begin{proof}
    See \cite[(1.5.8)]{Hirschhorn} for a proof of this result.
\end{proof}

\begin{corollary}
      We have
\begin{equation}\label{f22/f4}
    \frac{f_2^2}{f_4}=1+2\sum\limits_{n\ge 1}(-1)^nq^{2n^2}.
\end{equation}
\end{corollary}
\begin{proof}

This follows from Lemma \ref{lemma:f12overf21} by replacing $q$ by $q^2$ everywhere in (\ref{f12/f2}).
\end{proof}
We now provide the congruence--related tools necessary to complete our proofs in the next section.  We begin with an extremely well--known result which, in essence, follows from the Binomial Theorem. 
\begin{lemma}\label{fdream}
    For a prime $p$ and positive integers $k$ and $l$,
    \begin{equation}\label{modp^k}
        f_l^{p^k}\equiv f_{lp}^{p^{k-1}}\pmod{p^k}.
    \end{equation}
\end{lemma}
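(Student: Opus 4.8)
The final statement to prove is Lemma~\ref{fdream}, the ``freshman's dream'' congruence $f_l^{p^k}\equiv f_{lp}^{p^{k-1}}\pmod{p^k}$.

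\medskip

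The plan is to induct on $k$. For the base case $k=1$, I would expand $f_l = \prod_{i\ge 1}(1-q^{li})$ and raise to the $p$-th power. Working in the ring $\mathbb{Z}[[q]]$, the multinomial expansion of $(1-x)^p$ has all intermediate coefficients $\binom{p}{j}$ for $1\le j\le p-1$ divisible by $p$ (this is the elementary Binomial Theorem observation, since $p$ is prime), so $(1-x)^p\equiv 1-x^p\pmod p$. Applying this factor by factor to the (formal) infinite product gives
\[
f_l^p=\prod_{i\ge 1}(1-q^{li})^p\equiv\prod_{i\ge 1}(1-q^{lpi})=f_{lp}\pmod p,
\]
which is the $k=1$ case. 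One should note that this manipulation with infinite products is legitimate in $\mathbb{Z}[[q]]$ because each coefficient of the power series is a finite sum, so the congruence holds coefficientwise.

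\medskip

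For the inductive step, suppose $f_l^{p^{k-1}}\equiv f_{lp}^{p^{k-2}}\pmod{p^{k-1}}$, i.e.\ $f_l^{p^{k-1}}=f_{lp}^{p^{k-2}}+p^{k-1}g$ for some $g\in\mathbb{Z}[[q]]$. Raising both sides to the $p$-th power and using the Binomial Theorem,
\[
f_l^{p^k}=\bigl(f_{lp}^{p^{k-2}}+p^{k-1}g\bigr)^p
= f_{lp}^{p^{k-1}} + \sum_{j=1}^{p}\binom{p}{j}f_{lp}^{p^{k-2}(p-j)}\bigl(p^{k-1}g\bigr)^j.
\]
Each term with $1\le j\le p-1$ carries a factor $\binom{p}{j}$ divisible by $p$ together with $p^{(k-1)j}$, giving divisibility by $p^{1+(k-1)j}\ge p^{1+(k-1)}=p^k$; the $j=p$ term carries $p^{(k-1)p}$, and $(k-1)p\ge k$ whenever $p\ge 2$ and $k\ge 1$ (indeed $(k-1)p - k = (k-1)(p-1) - 1 \ge -1$ with equality only in the already-covered base case $k=1$, so for $k\ge 2$ we have $(k-1)p\ge k$). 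Hence every term in the sum is $\equiv 0\pmod{p^k}$, and $f_l^{p^k}\equiv f_{lp}^{p^{k-1}}\pmod{p^k}$, completing the induction.

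\medskip

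I do not anticipate a genuine obstacle here; the result is entirely standard. The only point requiring a modicum of care is the bookkeeping on $p$-adic valuations in the inductive step --- making sure that the exponent of $p$ in each cross term is at least $k$ --- and the routine remark that congruences between formal power series are to be read coefficientwise, so that the infinite-product manipulation in the base case is valid. Since the paper explicitly says this ``in essence, follows from the Binomial Theorem,'' a short proof along the above lines (or even simply citing the Binomial Theorem with the one-line base case and a remark that iteration gives the general $k$) is all that is needed.
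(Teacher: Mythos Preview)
Your proof is correct and is precisely the standard binomial-theorem induction that the paper alludes to; the paper itself does not write out a proof but simply cites \cite[Lemma 3]{daSilva-Sellers_20}. Your bookkeeping on the $p$-adic valuations in the inductive step is accurate, and since the inductive step is only invoked for $k\ge 2$ the inequality $(k-1)p\ge k$ holds exactly as you note.
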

\begin{proof}
    See \cite[Lemma 3]{daSilva-Sellers_20} for a proof.
\end{proof}

In \cite[Theorem 1.1]{Nath-Saikia-Sarma}, Nath, Saikia, and Sarma use Lemma \ref{fdream} to prove that, if $p$ is a prime and $1\leq r\leq p-1$, then 
\begin{equation}\label{modp}
T_{\ell,p}(pn+r)\equiv0\pmod{p}.\end{equation}

Note that Lemma \ref{fdream} can be generalized in a natural way in order to obtain the following:  

\begin{lemma}\label{cor.modp^k}
For a prime $p$ and positive integers $k,l,\text{ and }s$ with $k-s\geq0$,
\begin{equation}
        f_l^{p^km}\equiv f_{lp^{s}}^{p^{k-s}m}\pmod{p^{k-s+1}}.
    \end{equation}
\end{lemma}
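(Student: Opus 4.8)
The plan is to derive Lemma \ref{cor.modp^k} from Lemma \ref{fdream} by iterating the latter $s$ times. First I would treat the case $m=1$. Applying \eqref{modp^k} with exponent $p^k$ gives $f_l^{p^k}\equiv f_{lp}^{p^{k-1}}\pmod{p^k}$. Now raise both sides... but a cleaner route is to apply \eqref{modp^k} again, this time to $f_{lp}^{p^{k-1}}$ (legitimate since $k-1\ge 1$ when $s\ge 2$), obtaining $f_{lp}^{p^{k-1}}\equiv f_{lp^2}^{p^{k-2}}\pmod{p^{k-1}}$. The subtlety here is that the two congruences hold to different moduli ($p^k$ versus $p^{k-1}$), so chaining them only yields a conclusion modulo the smaller one, $p^{k-1}$; after $s$ steps the modulus has degraded to $p^{k-s+1}$, which is exactly the modulus claimed in the statement. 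I would make this precise by induction on $s$: the base case $s=1$ is \eqref{modp^k} verbatim, and for the inductive step, assuming $f_l^{p^km}\equiv f_{lp^{s-1}}^{p^{k-s+1}m}\pmod{p^{k-s+2}}$, I apply \eqref{modp^k} (with $l$ replaced by $lp^{s-1}$, $k$ replaced by $k-s+1$, and raising to the $m$-th power) to get $f_{lp^{s-1}}^{p^{k-s+1}m}\equiv f_{lp^{s}}^{p^{k-s}m}\pmod{p^{k-s+1}}$, then combine the two congruences modulo the smaller modulus $p^{k-s+1}$.

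The only points requiring care are bookkeeping ones. I need $k-s+1\ge 1$, i.e. $k-s\ge 0$, which is precisely the hypothesis; this guarantees every application of Lemma \ref{fdream} in the chain has a positive exponent parameter and produces a positive modulus. I also need the elementary fact that if $a\equiv b\pmod{p^e}$ then $a^m\equiv b^m\pmod{p^e}$ for any positive integer $m$, so that the $m$-th power can be carried along at each stage; this is immediate from $a\equiv b\pmod{p^e}$ implying $p^e\mid a-b\mid a^m-b^m$. Finally, combining $x\equiv y\pmod{p^{e+1}}$ with $y\equiv z\pmod{p^e}$ gives $x\equiv z\pmod{p^e}$, since $p^e\mid p^{e+1}\mid x-y$ and $p^e\mid y-z$.

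I do not expect any real obstacle here — the statement is a routine strengthening whose proof is a short induction. The one thing to watch is the off-by-one in the modulus: it is tempting to hope the conclusion holds modulo $p^{k-s+ \text{something larger}}$, but the loss of one power of $p$ per iteration is genuine and the stated modulus $p^{k-s+1}$ is the correct (and best possible from this argument) one. I would write the proof in two or three lines: state the induction on $s$, note the base case is Lemma \ref{fdream}, and give the inductive step as above, citing only Lemma \ref{fdream} and elementary properties of congruences.

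\begin{proof}
We argue by induction on $s$. When $s=1$, the claim is exactly \eqref{modp^k} raised to the $m$-th power, using that $a\equiv b\pmod{p^e}$ implies $a^m\equiv b^m\pmod{p^e}$. Suppose the result holds for some $s$ with $1\le s$ and $k-s\ge 0$, and suppose further $k-(s+1)\ge 0$. By the inductive hypothesis,
\begin{equation*}
f_l^{p^k m}\equiv f_{lp^{s}}^{p^{k-s}m}\pmod{p^{k-s+1}}.
\end{equation*}
Applying Lemma \ref{fdream} with $l$ replaced by $lp^{s}$ and $k$ replaced by $k-s\ge 1$, and then raising to the $m$-th power, gives
\begin{equation*}
f_{lp^{s}}^{p^{k-s}m}\equiv f_{lp^{s+1}}^{p^{k-s-1}m}\pmod{p^{k-s}}.
\end{equation*}
Since $p^{k-s}\mid p^{k-s+1}$, reducing the first congruence modulo $p^{k-s}$ and combining with the second yields
\begin{equation*}
f_l^{p^k m}\equiv f_{lp^{s+1}}^{p^{k-(s+1)}m}\pmod{p^{k-(s+1)+1}},
\end{equation*}
which completes the induction.
\end{proof}
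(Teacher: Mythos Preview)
Your proof is correct and follows exactly the approach of the paper, which simply states that the congruence follows by applying \eqref{modp^k} $s$ times. You have merely made explicit the induction on $s$ and the bookkeeping with the decreasing moduli that the paper leaves implicit.
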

\begin{proof}
    The congruence follows by applying \eqref{modp^k} $s$ times.
\end{proof}

With Lemma \ref{cor.modp^k} in hand, we can introduce a generalized version of the congruence in (\ref{modp}), the proof of which is almost immediate. 

\begin{theorem}\label{thm.gen1.1}
Let $p$ be a prime and $m, \ell>0$ be integers. For $\alpha, s>0$ satisfying $\alpha-s\ge 0$ and $1\leq r\leq p^s-1$,
\[T_{\ell,p^\alpha m}(p^s n+r)\equiv 0 \pmod{p^{\alpha-s+1}}.\]
\end{theorem}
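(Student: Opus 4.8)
The plan is to mimic the argument that Nath, Saikia, and Sarma used for \eqref{modp}, but feeding in the stronger divisibility provided by Lemma \ref{cor.modp^k} instead of Lemma \ref{fdream}. First I would write down the generating function
\[
\sum_{n\ge 0} T_{\ell,p^\alpha m}(n)\,q^n = \frac{f_\ell^{\,p^\alpha m}}{f_1^{\,p^\alpha m}}.
\]
Applying Lemma \ref{cor.modp^k} with the exponent $p^\alpha m$ and the shift $s$ (legitimate since $\alpha-s\ge 0$, so $f_1^{\,p^\alpha m}\equiv f_{p^s}^{\,p^{\alpha-s}m}\pmod{p^{\alpha-s+1}}$) gives
\[
\sum_{n\ge 0} T_{\ell,p^\alpha m}(n)\,q^n \equiv f_\ell^{\,p^\alpha m}\cdot \frac{1}{f_{p^s}^{\,p^{\alpha-s}m}} \pmod{p^{\alpha-s+1}},
\]
after noting that the reciprocal of a power series with constant term $1$ is well defined over $\mathbb{Z}/p^{\alpha-s+1}\mathbb{Z}$ and that $a\equiv b\pmod{p^{\alpha-s+1}}$ with $a,b$ units implies $a^{-1}\equiv b^{-1}$.

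Next I would observe that the right-hand side, modulo $p^{\alpha-s+1}$, is a power series in $q^{\gcd(\ell,p^s)}$ coming from the $f_\ell^{p^\alpha m}$ factor times a power series in $q^{p^s}$ coming from $1/f_{p^s}^{p^{\alpha-s}m}$. The cleanest way to finish is to handle the two factors separately: the series $1/f_{p^s}^{p^{\alpha-s}m}$ is supported on exponents divisible by $p^s$, so it contributes nothing to residue classes $r$ with $1\le r\le p^s-1$. For the factor $f_\ell^{p^\alpha m}$ I would again invoke Lemma \ref{cor.modp^k}, this time as $f_\ell^{\,p^\alpha m}\equiv f_{\ell p^s}^{\,p^{\alpha-s}m}\pmod{p^{\alpha-s+1}}$, which is supported on exponents divisible by $p^s$ as well. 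Hence the entire product is, modulo $p^{\alpha-s+1}$, supported on exponents divisible by $p^s$, and therefore every coefficient $T_{\ell,p^\alpha m}(p^s n + r)$ with $1\le r\le p^s-1$ vanishes modulo $p^{\alpha-s+1}$. Extracting the arithmetic progression $p^s n + r$ then yields the claim.

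The only genuine subtlety — and the step I would be most careful about — is the passage from a congruence between the two \emph{numerators} $f_1^{p^\alpha m}$ and $f_{p^s}^{p^{\alpha-s}m}$ to a congruence between their reciprocals inside the generating function; this requires knowing that congruences modulo $p^{\alpha-s+1}$ are preserved under inversion of formal power series with unit constant term, which is immediate since such series form a group under multiplication in $(\mathbb{Z}/p^{\alpha-s+1}\mathbb{Z})[[q]]$. Everything else is bookkeeping: checking the hypothesis $\alpha - s \ge 0$ is exactly what makes the exponent $p^{\alpha-s}m$ a positive integer so that Lemma \ref{cor.modp^k} applies, and noting that the case $s=\alpha$ recovers \eqref{modp} (with $\ell$ replaced by a prime power base) as a sanity check. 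I would also remark that, unlike \eqref{modp}, here the modulus $p^{\alpha-s+1}$ can be a proper prime power, which is the source of the extra strength in applications such as Theorem \ref{thm.p357mod8}.
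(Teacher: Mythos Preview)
Your proposal is correct and follows essentially the same route as the paper: apply Lemma \ref{cor.modp^k} to both numerator and denominator of $f_\ell^{p^\alpha m}/f_1^{p^\alpha m}$ to obtain a series in $q^{p^s}$ modulo $p^{\alpha-s+1}$, then read off that the coefficients in the nontrivial residue classes vanish. The paper does both applications in a single line and omits the discussion of inverting power series modulo $p^{\alpha-s+1}$, whereas you treat the two factors separately and spell out that subtlety; the extra care is welcome but not a different argument.
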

\begin{proof}
We use Lemma \ref{cor.modp^k} to obtain 
    \begin{align*}
            \sum_{n\geq0} T_{\ell,p^\alpha m}(n)p^n=\frac{f_\ell^{p^\alpha m}}{f_1^{p^\alpha m}}\equiv \frac{f_{p^s\ell}^{p^{\alpha-s} m}}{f_{p^s}^{p^{\alpha-s} m}}\pmod{p^{\alpha-s+1}}.
    \end{align*}
Notice that the right-hand side is a function of $q^{p^s}$. Thus, the coefficients of $q^{p^sn+r}$ with $1\leq r\leq p^s-1$ equal zero. This completes the proof.
\end{proof}

We close this section by proving a parity characterization for the particular function $T_2(n)$.
\begin{lemma} \label{mod2}
For all $n\geq 0$, 
$$
T_2(n) \equiv \begin{cases}
			1\pmod{2}, & \text{if $n = m(m+1)/2$ for some $m$}\\
            0\pmod{2}, & \text{otherwise}.
		 \end{cases}
$$
\end{lemma}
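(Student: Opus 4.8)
The plan is to compute $\sum_{n\ge 0} T_2(n)q^n = f_2^3/f_1^3$ modulo $2$ by repeated use of Lemma \ref{fdream} (the ``freshman's dream'' for $p=2$). First I would write $f_1^3 = f_1 \cdot f_1^2 \equiv f_1 \cdot f_2 \pmod 2$, so that modulo $2$,
\[
\frac{f_2^3}{f_1^3} \equiv \frac{f_2^3}{f_1 f_2} = \frac{f_2^2}{f_1} \pmod 2.
\]
Then, again using $f_1^2 \equiv f_2 \pmod 2$, hence $f_1 \equiv f_1^2 \cdot f_1^{-1}$ is awkward directly, so instead I would multiply numerator and denominator by $f_1$: $f_2^2/f_1 = f_1 f_2^2 / f_1^2 \equiv f_1 f_2^2 / f_2 = f_1 f_2 \pmod 2$. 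Actually the cleanest route is: $f_2^3/f_1^3 \equiv f_2^3/(f_1 f_2) = f_2^2/f_1 \equiv f_2^2 \cdot f_1 / f_1^2 \equiv f_2^2 f_1 / f_2 = f_1 f_2 \pmod 2$. So it suffices to identify $f_1 f_2 \pmod 2$.

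Next I would observe that $f_1 f_2 \equiv f_1 \cdot f_1^2 = f_1^3 \pmod 2$, and now invoke Lemma \ref{f13}, namely $f_1^3 = \sum_{m\ge 0}(-1)^m(2m+1)q^{m(m+1)/2}$. Reducing the coefficients modulo $2$, each coefficient $(-1)^m(2m+1)$ is odd, so
\[
f_1^3 \equiv \sum_{m\ge 0} q^{m(m+1)/2} \pmod 2.
\]
Since the triangular numbers $m(m+1)/2$ are pairwise distinct as $m$ ranges over $\mathbb{Z}_{\ge 0}$, this shows the coefficient of $q^n$ in $f_2^3/f_1^3$ is odd precisely when $n$ is a triangular number and even otherwise, which is exactly the claimed characterization of $T_2(n) \bmod 2$.

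There is really no serious obstacle here; the only point requiring a little care is the bookkeeping in the chain of congruences $f_2^3/f_1^3 \equiv \dots \equiv f_1^3 \pmod 2$, where one must repeatedly clear denominators against $f_1^2 \equiv f_2$ and make sure each step is a genuine identity of formal power series with integer coefficients before reducing mod $2$ (equivalently, one can phrase everything in terms of the ring $\mathbb{Z}[[q]]$ and note $f_1, f_2$ are units times honest power series). An alternative, perhaps even shorter, is to go directly: $f_2^3/f_1^3 = (f_2/f_1)^3 \cdot \ldots$ — but the route through $f_1 f_2 \equiv f_1^3$ and then Jacobi's identity \eqref{f13} is the most transparent, so that is what I would present.
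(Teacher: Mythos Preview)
Your proof is correct and follows essentially the same approach as the paper: reduce $f_2^3/f_1^3$ to $f_1^3$ modulo $2$ via the freshman's dream, then invoke Jacobi's identity \eqref{f13}. The paper's reduction is a single step---since $f_2 \equiv f_1^2 \pmod 2$ gives $f_2^3 \equiv f_1^6$, one has $f_2^3/f_1^3 \equiv f_1^6/f_1^3 = f_1^3$ immediately---whereas your chain through $f_2^2/f_1$ and $f_1 f_2$ is an unnecessarily circuitous route to the same destination.
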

\begin{proof}
We see that 
\begin{align*}
\sum_{n\geq0}T_2(n)q^n
&= \frac{f_2^3}{f_1^3} \\
&\equiv \frac{f_1^6}{f_1^3} \pmod{2} \text{\ \ thanks to  \eqref{modp^k}} \\
&= f_1^3.
\end{align*}
Thanks to  \eqref{f13}, the result follows.  
\end{proof}

With the above tools in hand, we now proceed to prove Theorem \ref{thm.nssconj} and Theorem \ref{thm.p357mod8}. 

\section{Proofs of Our Results}\label{sec:ourproofs}

We begin by sharing our proof of Theorem \ref{thm.nssconj} which generalizes the original conjecture of Nath, Saikia, and Sarma \cite{Nath-Saikia-Sarma}.  

\begin{proof}[Proof (of Theorem \ref{thm.nssconj})]
Using $\alpha=0$ in \eqref{TT29n24} and \eqref{TTT29n24} gives
\begin{align*}
    T_2(9n+4)&\equiv 0\pmod{24},\\
    T_2(9n+7)&\equiv0\pmod{24},
\end{align*}
respectively. Thus, writing $$k=9n+\frac{Nt^2-1}{8},$$
it suffices to show that $k\equiv4 \text{ or } 7\pmod 9$. Since $\gcd(t,6)=1$, $t$ must have one of the forms $6m+1 \text{ or } 6m+5$. Therefore, 
\[
Nt^2 \equiv\begin{cases}
    6 \pmod{9} & \text{ if } N=33,\\
    3\pmod{9} & \text{ if } N=57.
\end{cases}
\]
Observing that $8^{-1}\equiv-1\pmod{9}$, we have  
\[
k\equiv \begin{cases}
    4 \pmod{9} & \text{ if } N=33,\\
    7\pmod{9} & \text{ if } N=57.
\end{cases}
\]\end{proof}


Next, we provide an elementary proof of Theorem \ref{thm.p357mod8}.  
\begin{proof}[Proof (of Theorem \ref{thm.p357mod8})]
 Thanks to \eqref{modp^k}, we have 
\begin{equation}\label{mod8}
    \sum_{n\geq0}T_2(n)q^n=\frac{f_2^3}{f_1^3}=\frac{f_2^4}{f_1^3f_2}\equiv\frac{f_1^5}{f_2^2}f_2 \pmod8.
\end{equation}
Using \eqref{f1} and \eqref{f15/f22} in \eqref{mod8}, we obtain 
\begin{align*}
    \sum_{n\geq0}T_2(n)q^n\equiv \sum_{m\in\mathbb{Z}}\sum_{k\in\mathbb{Z}}(-)^k(6m+1)q^{m(3m+1)/2+k(3k-1)} \pmod 8.
\end{align*}
We now need to check whether $l:=p^{2\alpha+1}n+\cfrac{p^{2\alpha+2}-1}{8}$ can be represented as $\cfrac{m(3m+1)}{2}+k(3k-1)$. Equivalently, we check whether $$24l+3=(6m+1)^2+2(6k-1)^2.$$ 
Let $\nu_p(N)$ be the highest power of $p$ dividing $N$. We first consider primes $p\equiv5\text{ or }7\mod 8$. Then, we have $\left(\cfrac{-2}{p}\right)=-1$. So, if $N=x^2+2y^2$, then $2|\nu_p(N)$. Also, for $p\nmid n$, we have $\nu_p(24l+3)=2\alpha+1$. Thus, $24l+3$ cannot be of the form $x^2+2y^2$. Hence, for $p\equiv5 \text{ or } 7 \mod 8$,
$$T_2\left(p^{2\alpha+1}n+\frac{p^{2\alpha+2}-1}{8}\right)\equiv 0\pmod{8}.$$

The above proof only deals with the cases $p\equiv5 \text{ or } 7 \mod 8$, which still leaves us with the case $p\equiv3 \mod 8$.  This requires a different idea.  In this case, we replace 
$q$ by $-q$ and then, using \eqref{f1-q} and \eqref{modp^k}, we obtain 
\begin{equation} \label{mod82}
    \sum_{n\geq0}T_2(n)(-q)^n =f_2^3\left(\frac{f_1f_4}{f_2^3}\right)^3=\frac{f_1^3f_4^3f_2^2}{f_2^8}\equiv \frac{f_1^3f_4^3f_2^2}{f_4^4}= f_1^3\frac{f_2^2}{f_4} \pmod{8}.
\end{equation}
Substituting \eqref{f13} and \eqref{f22/f4} in \eqref{mod82} gives
\begin{align*}
   \sum_{n\geq0}T_2(n)(-q)^n\equiv \sum_{m\in\mathbb{Z}}(-1)^m(2m+1)q^{m(m+1)/2}\left(1+2\sum_{k\geq 1}(-1)^kq^{2k^2}\right) \pmod{8}.
\end{align*}
Now we need to check whether we have 
$$l=\frac{m(m+1)}{2}+2k^2,$$
that is, $$8l+1=(2m+1)^2+(4k)^2.$$
For primes $p\equiv 3\mod 4$, we note that $\left(\cfrac{-1}{p}\right)=-1$. So, for any $N=x^2+y^2$, we have $2|\nu_p(N)$. However, for $p\nmid n$,  $\nu_p(8l+1)=2\alpha+1$. This completes the proof of the congruences modulo 8.

For the second part, it suffices to show that $T_2(l)\equiv 0\pmod{3}$. For this, we check whether $3|l$ (in order to apply \eqref{modp}). Since $8^{-1}\equiv-1 \pmod 3$, we have 
\[l=p^{2\alpha+1}n+\cfrac{p^{2\alpha+2}-1}{8}\equiv
\begin{cases}n \pmod{3} &\text{ if } p\equiv1\pmod{3},\\
-n \pmod{3} & \text{ if } p\equiv -1\pmod {3},\\
1 \pmod{3} & \text{ if } p=3.
\end{cases}
\]
Since $3\nmid n$, we have $3\nmid l$ and, by \eqref{modp}, $T_2(l)\equiv0\pmod 3$.

\end{proof}

\section{Closing Thoughts}\label{sec:closingthoughts}

We conclude by mentioning some additional observations. 
In the case when $\alpha = 0$ and $n$ is replaced by $pn+s$ for $1\leq s \leq p-1$, Theorem \ref{thm.p357mod8} shows that
\[
T_2(p^2n+r)\equiv 0\pmod{8},
\]
when
$p\equiv 3, 5, \text{ or } 7\pmod{8}$ is prime, and $r=\displaystyle{ps+\frac{p^2-1}{8}}$. 
Our computations further indicate that $T_2(n)$ is divisible by $32$ in the following cases:
\[
\begin{aligned}
	T_2(25n+8)&\equiv 0 \pmod{32},\\[1mm]
	T_2(25n+13)&\equiv 0 \pmod{32},\\[1mm]
	T_2(25n+18)&\equiv 0 \pmod{32},\\[1mm]
	T_2(25n+23)&\equiv 0 \pmod{32},\\[1mm]
	T_2(49n+13)&\equiv 0 \pmod{32},\\[1mm]
	T_2(49n+20)&\equiv 0 \pmod{32},\\[1mm]
	T_2(49n+27)&\equiv 0 \pmod{32},\\[1mm]
	T_2(49n+34)&\equiv 0 \pmod{32},\\[1mm]
	T_2(49n+41)&\equiv 0 \pmod{32},\\[1mm]
	T_2(49n+48)&\equiv 0 \pmod{32}.
\end{aligned}
\]
It would be interesting to determine whether these congruences hold for all $n\geq 0$, and whether they extend to $T_2(p^2n+r)$ for primes $p>7$. Moreover, our computations suggest that, with only a few exceptions, the congruence
\[
T_2(7n+6)\equiv 0\pmod{2^9}
\]
appears to hold. We leave these questions for the interested reader.

\bibliographystyle{plain}
\nocite{*}
\bibliography{k_tuples_l_regular.bib}

\begin{thebibliography}{10}

\bibitem{Adiga-Dasappa}
Chandrashekar Adiga and Ranganatha Dasappa.
\newblock On 3-regular tripartitions.
\newblock {\em Acta Math. Sin. (Engl. Ser.)}, 35(3):355--368, 2019.

\bibitem{Baruah-Das}
Nayandeep~Deka Baruah and Hirakjyoti Das.
\newblock Generating functions and congruences for 9-regular and 27-regular partitions in 3 colours.
\newblock {\em Hardy-Ramanujan J.}, 44:102--115, 2021.

\bibitem{daSilva-Sellers_20}
Robson da~Silva and James~A. Sellers.
\newblock Infinitely many congruences for {$k$}-regular partitions with designated summands.
\newblock {\em Bull. Braz. Math. Soc. (N.S.)}, 51(2):357--370, 2020.

\bibitem{daSilva-Sellers}
Robson da~Silva and James~A. Sellers.
\newblock Arithmetic properties of 3-regular partitions in three colours.
\newblock {\em Bull. Aust. Math. Soc.}, 104(3):415--423, 2021.

\bibitem{Furcy-Penniston}
David Furcy and David Penniston.
\newblock Congruences for {$\ell$}-regular partition functions modulo 3.
\newblock {\em Ramanujan J.}, 27(1):101--108, 2012.

\bibitem{Gireesh-Mahadeva_Naika}
D.~S. Gireesh and M.~S. Mahadeva~Naika.
\newblock On 3-regular partitions in 3-colors.
\newblock {\em Indian J. Pure Appl. Math.}, 50(1):137--148, 2019.

\bibitem{Gordon-Ono}
Basil Gordon and Ken Ono.
\newblock Divisibility of certain partition functions by powers of primes.
\newblock {\em Ramanujan J.}, 1(1):25--34, 1997.

\bibitem{Hirschhorn}
Michael~D. Hirschhorn.
\newblock {\em The power of {$q$}}, volume~49 of {\em Developments in Mathematics}.
\newblock Springer, Cham, 2017.
\newblock A personal journey, With a foreword by George E. Andrews.

\bibitem{Hirschhorn-Sellers}
Michael~D. Hirschhorn and James~A. Sellers.
\newblock Elementary proofs of parity results for 5-regular partitions.
\newblock {\em Bull. Aust. Math. Soc.}, 81(1):58--63, 2010.

\bibitem{Murugan-Fathima}
P.~Murugan and S.~N. Fathima.
\newblock Arithmetic properties of 3-regular 6-tuple partitions.
\newblock {\em Indian J. Pure Appl. Math.}, 54(4):1249--1261, 2023.

\bibitem{Nadji-Ahmia}
Mohammed~L. Nadji and Moussa Ahmia.
\newblock Congruences for {$\ell$}-regular tripartitions for {$\ell \in \{2, 3\}$}.
\newblock {\em Integers}, 24:Paper No. A86, 12, 2024.

\bibitem{Nath-Saikia-Sarma}
Hemjyoti Nath, Manjil~P. Saikia, and Abhishek Sarma.
\newblock Arithmetic properties of $k$-tuple $\ell$-regular partitions, 2024.

\bibitem{Rahman-Saikia}
Riyajur Rahman and Nipen Saikia.
\newblock Arithmetic properties of 5-regular partition in three and five colours.
\newblock {\em J. Anal.}, 30(4):1427--1438, 2022.

\bibitem{Ramanujan}
S.~Ramanujan.
\newblock Congruence properties of partitions.
\newblock {\em Math. Z.}, 9(1-2):147--153, 1921.

\bibitem{Vidya}
K.~N. Vidya.
\newblock On {$m$}-regular partitions in {$k$}-colors.
\newblock {\em Indian J. Pure Appl. Math.}, 54(2):389--397, 2023.

\end{thebibliography}


\end{document}